\definecolor{verde}{rgb}{0.,0.6,0.2}
\definecolor{bianco}{rgb}{1.,1.,1.}
\definecolor{marrone}{rgb}{0.7,0.2,0.1}
\definecolor{rosso}{rgb}{1,0,0}
\definecolor{giallo}{rgb}{1.0, 0.87, 0.0}
\definecolor{blu}{rgb}{0.03, 0.27, 0.49}
\definecolor{daffodil}{rgb}{0.03, 0.27, 0.49}
\definecolor{darkcerulean}{rgb}{1.0, 0.87, 0.0}
\newtheorem{theorem}{Theorem}[section]
\newtheorem{definition}[theorem]{Definition}
\newtheorem{lemma}[theorem]{Lemma}
\newtheorem{proposition}[theorem]{Proposition}
\newtheorem{remark}[theorem]{Remark}
\newtheorem{example}[theorem]{Example}
\renewenvironment{abstract}{%
  \noindent\bfseries\abstractname.\normalfont}{}
\begin{document}

\newgeometry{a4paper,
 total={170mm,257mm},
  left=30mm,
 right=30mm,
 top=55mm,
 bottom=35mm
 }

\title{{\textbf{ Kähler Immersions of ALE Kähler Manifolds into
Complex Space Forms}}}

\author[1]{Farnaz Ghanbari} 
\author[2] {Abbas Heydari\footnote{The corresponding author, aheydari@modares.ac.ir}}
\affil[1]{Tarbiat Modares University, Tehran, Iran and ICTP, Trieste, Italy}
\affil[2]{Tarbiat Modares University, Tehran, Iran}

\date{}
\maketitle 

\hspace{1.4cm}\begin{minipage}[b]{0.77\linewidth}

\begin{abstract}
In this article, we investigate the Kähler immersions of special Asymptotically Locally Euclidean (ALE) Kähler metrics into complex space forms. We provide a relation between Kähler immersions problem of these metrics and the sign of their mass. Our result shows that these metrics with positive mass do not admit a Kähler immersion into complex Euclidean space. Moreover, we also obtain that these metrics with any sign of mass cannot be Kähler immersed into complex hyperbolic space.
\end{abstract}
\end{minipage}

\section{Introduction}
\label{intro}

\hspace{10pt} There exists a well-known theorem in Riemannian geometry by John Nash, which states that any Riemannian manifold can be isometrically immersed into the real Euclidean space $\mathbb{R}^{N}$ for sufficiently large $N$. In contrast to the Riemannian case, a Kähler manifold does not always admit a Kähler immersion into the complex Euclidean space $\mathbb{C}^{N}$ (not even if $N$ is infinite). The study of Kähler immersions between Kähler manifolds was started by Eugenio Calabi. Calabi established an algebraic criterion to determine when a Kähler manifold $(M, g)$ admits a Kähler (i.e., holomorphic and isometric) immersion into a finite or infinite-dimensional complex space form \cite{Calabi}. Calabi’s initial observation was that if a Kähler immersion of $(M, g)$ into a complex space form exists, then the metric $g$ is forced to be real analytic being the pull-back via a holomorphic map of the real analytic metric of a complex space form \cite{Calabi}. He introduced the diastasis function (special unique Kähler potential) which plays a key role in studying Kähler immersions of a Kähler manifold into another Kähler manifold. In practice, the diastasis function is not always explicitly provided, and while Calabi’s criterion is theoretically impeccable, it often poses challenges during application. Nevertheless, over the past six decades, numerous mathematicians have worked on the subject and many interesting results have been obtained (\cite{Loi1, Loi2, Loi4, Loi3, Filippo, Scala, Zedda}).

This work contributes to the research line of Kähler immersions problem, to be specific Kähler immersions of special ALE Kähler metrics into complex space forms. We need to compute the diastasis associated to these metrics and investigate the Kähler immersions problem into complex space forms such as complex Euclidean space, complex projective space and complex hyperbolic space. Therefore, it is required to choose a point ($p$), write a diastasis function centered at $p$, consider the power expansion of the diastasis function around the origin and compute all the coefficients in its power expansion, then we need to use Calabi's criterion to discuss about this pro-
\restoregeometry
\hspace{-15pt}blem. The main reason to explore and study Kähler immersions problem for these spaces is because they are interesting in General Relativity. One of the most interesting problems in General Relativity is related to the mass which is important for physicists. This concept originated in General Relativity, where an asymptotically flat $3$-manifold could be interpreted as representing a time-symmetric slice of some $4$-dimensional space-time, in which case this invariant becomes the so-called ADM mass \cite{Mass}. In \cite{Hein} the authors established a straightforward and explicit formula for the mass of any ALE Kähler manifold. This result assumes only the type of weak fall-off conditions necessary for the mass to be well-defined. We establish a connection between Kähler immersions problem of these metrics and the sign of their mass. Our main result demonstrates that these metrics with positive mass do not admit a Kähler immersion into complex Euclidean space. Furthermore, we  prove that these metrics, regardless of their mass sign, cannot be Kähler immersed into complex hyperbolic space.
However, in the specific case of complex projective space, no conclusive results have been obtained.

The rest of the paper is organized as follows. Section \ref{sec:prel} provides essential definitions, some well-known theorems, and key results that will be used throughout the paper. In Section \ref{sec:sec1}, we state and prove the main theorems.

\section{Preliminaries}
\label{sec:prel}
\hspace{10pt} We devote this section to recalling the main definitions and results on Kähler immersions and mass in Kähler geometry, which will be essential for our subsequent discussions. We recommend the references (\cite{Calabi, Hein, Joyce, Loi})  for a comprehensive overview of the topics mentioned.

A Kähler immersion $f: (M,g) \longmapsto (S,gs)$ from a Kähler manifold $(M,g)$ into a complex space form $(S,gs)$ is a holomorphic map such that $f^{*} gs=g$ (here $g$ and $gs$ denote the Kähler metrics on $M$ and $S$ respectively.)

Recall that a complex space form, which we assume to be complete and simply connected, can be of three types based on the sign of the constant holomorphic sectional curvature: complex Euclidean space, complex projective space, and complex hyperbolic space.

   \begin{itemize}
        \item[1.]The complex Euclidean space $\mathbb{C}^{N}$ of complex dimension $N \leq \infty$, endowed with the flat metric. Here $\mathbb {C}^{\infty}$ denotes the Hilbert space $\ell ^{2}(\mathbb{C})$ consisting of sequences $w_{j}$, $j=1,2,... , w_{j}\in \mathbb{C}$ such that $\Sigma_{j=1}^{+\infty} |w_{j}|^{2}< + \infty$.\,
        \item[2.]The complex projective space $\mathbb{C}P^{N}$ of complex dimension $N \leq  \infty$, with the Fubini-Study metric $g_{FS}$.
           \item[3.]The complex hyperbolic space $\mathbb{C}H^{N}$ of complex dimension $N \leq  \infty$, that is the unit ball $B \subset \mathbb{C}^{N}$given by $B=\Bigl\{(z_{1},...,z_{N})\in \mathbb{C}^{N}, \sum_{j=1}^{N} |z_{j}|^{2} <1 \Bigl\}$ endowed with the hyperbolic metric.
    \end{itemize}

Let $M$ be an $n$-dimensional complex manifold endowed with a real analytic Kähler metric $g$. Recall that the Kähler metric $g$ is real analytic if we fix a local coordinate system $z = (z_1, . . . , z_n)$ on a neighbourhood $U$ of any point $p \in M$; it can be described on $U$ by a real analytic Kähler potential $\Phi : U \longrightarrow \mathbb{R} $. In that case, the potential $\Phi(z)$ can be analytically continued to an open neighbourhood $W \subset U \times U$ of the diagonal. This extension is denoted by $\Phi(z,\Bar{w})$.

The diastasis function associated with the metric $g$ provides the criterion, as stated by Calabi. Below, we present the definition of the diastasis function.

\begin{definition}
    The diastasis function $D(z,w)$ on $W$ is defined by:
    \[
   D(z,w)= \Phi(z,\Bar{z})+  \Phi(w,\Bar{w})- \Phi(z,\Bar{w})- \Phi(w,\Bar{z}). 
    \]
\end{definition}

The diastasis function $D(z,w)$ has some fundamental properties. The following proposition describes these fundamental properties:

\begin{proposition}(\cite{Calabi})
The diastasis function D(z, w) satisfies the following properties:

   \begin{itemize}
        \item[1.]It is uniquely determined by the Kähler metric g and it does not depend on
the choice of the local coordinate system;
\,
        \item[2.] It is real valued in its domain of (real) analyticity;\,
         \item[3.]  It is symmetric in z and w and D(z, z) = 0;\,
           \item[4.] Once fixed one of its two entries, it is a Kähler potential for $g$.
    \end{itemize}

\end{proposition}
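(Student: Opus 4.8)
The plan is to verify the four properties directly from the defining formula, the only real inputs being the structure of real-analytic Kähler potentials and the way polarizations behave under holomorphic coordinate changes. Throughout, I write $\Phi(z,\bar w)$ for the holomorphic-in-$z$, antiholomorphic-in-$w$ extension of $\Phi$ to a neighbourhood of the diagonal, and I record two facts to be used repeatedly: (i) since $\Phi$ is a real potential, its extension satisfies the reality condition $\overline{\Phi(z,\bar w)}=\Phi(w,\bar z)$ (equivalently, the Taylor coefficients $a_{\alpha\bar\beta}$ of $\Phi$ at $p$ satisfy $\overline{a_{\alpha\bar\beta}}=a_{\beta\bar\alpha}$); (ii) two real potentials for the same metric $g$ differ by a pluriharmonic term $f+\bar f$ with $f$ holomorphic, whose extension is $f(z)+\overline{f(w)}$.

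Property~3 is immediate: interchanging $z$ and $w$ in the formula fixes the unordered pair of diagonal terms $\{\Phi(z,\bar z),\Phi(w,\bar w)\}$ and the unordered pair of mixed terms $\{\Phi(z,\bar w),\Phi(w,\bar z)\}$, so $D(w,z)=D(z,w)$; and $w=z$ makes all four terms equal, giving $D(z,z)=0$. For property~2, the diagonal terms are real because $\Phi$ is a real potential, while by~(i) the mixed terms are complex conjugates of one another, so $\Phi(z,\bar w)+\Phi(w,\bar z)=2\operatorname{Re}\Phi(z,\bar w)\in\mathbb{R}$; hence $D$ is real wherever it is analytic.

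For property~1, I would first show independence of the chosen potential: replacing $\Phi$ by $\Phi+f+\bar f$ as in~(ii) adds to $D(z,w)$ the quantity $\big(f(z)+\overline{f(z)}\big)+\big(f(w)+\overline{f(w)}\big)-\big(f(z)+\overline{f(w)}\big)-\big(f(w)+\overline{f(z)}\big)$, which cancels to $0$. Coordinate-independence then follows because under a biholomorphism $z\mapsto\zeta$ the potential pulls back, hence so does its polarization, $\tilde\Phi(\zeta,\bar\eta)=\Phi(z(\zeta),\overline{z(\eta)})$ (by uniqueness of the polarization); substituting into the defining formula gives $\tilde D(\zeta,\eta)=D(z(\zeta),z(\eta))$, so $D$ is a well-defined function near the diagonal of $M\times M$ that depends only on $g$. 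For property~4, fix $w=w_0$ and split $D(z,w_0)=\Phi(z,\bar z)+\big(\Phi(w_0,\bar w_0)-\Phi(z,\bar w_0)-\Phi(w_0,\bar z)\big)$; the parenthesised term equals a constant minus $h(z)+\overline{h(z)}$, where $h(z)=\Phi(z,\bar w_0)$ is holomorphic and, by~(i), $\Phi(w_0,\bar z)=\overline{h(z)}$, so it is pluriharmonic in $z$. Since $i\partial\bar\partial$ annihilates pluriharmonic functions, $i\partial\bar\partial_z D(z,w_0)$ equals the Kähler form of $g$, i.e.\ $D(\cdot,w_0)$ is a Kähler potential; the case of a fixed first entry is the same by property~3.

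All of the above reduces to one-line manipulations of the defining formula, so there is no serious obstacle; the one place requiring care is the bookkeeping around the extension to a neighbourhood of the diagonal — in particular justifying the reality identity $\overline{\Phi(z,\bar w)}=\Phi(w,\bar z)$, checking that analytic continuation sends $\overline{f(z)}$ to $\overline{f(w)}$, and that it commutes with holomorphic coordinate changes. Once these conventions are pinned down, each of the four claims follows at once.
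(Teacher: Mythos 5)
The paper offers no proof of this proposition; it is quoted verbatim from Calabi's 1953 paper as background. Your direct verification from the defining formula is correct and is essentially the classical argument: the two facts you isolate --- the reality identity $\overline{\Phi(z,\bar w)}=\Phi(w,\bar z)$ obtained from uniqueness of the polarization, and the fact that changing the potential by a pluriharmonic term $f+\bar f$ contributes $\bigl(f(z)+\overline{f(z)}\bigr)+\bigl(f(w)+\overline{f(w)}\bigr)-\bigl(f(z)+\overline{f(w)}\bigr)-\bigl(f(w)+\overline{f(z)}\bigr)=0$ --- are exactly the inputs needed, and properties 2, 3 and 4 then follow by the one-line manipulations you describe. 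The only point worth spelling out fully in a written version is the local statement that two real-analytic potentials for the same metric differ by $f+\bar f$ with $f$ holomorphic (the local $\partial\bar\partial$-lemma for pluriharmonic functions), which you invoke correctly but do not prove.
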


Once $p$ is fixed and $z$ is the coordinate around it, $D(p, q) = D_p(z)$ is a Kähler potential in a neighborhood of $p$. The existence of a local Kähler immersion into a complex space form depends only on the derivatives with respect to $z$ and $\Bar{z}$ of $D_{p}(z)$, evaluated at $p$. In particular, if $p$ is the origin of the selected coordinate system, we denote $D_{0}(z)$.

In the following, we employ a multi-index notation: $z^{m_j} = z_1^{m_{j,1}} \ldots z_n^{m_{j,n}}$, where the strings $m_j = (m_{j,1}, \ldots, m_{j,n})$ are ordered such that $|m_j| \leq |m_k|$ for $j < k$.

\begin{theorem}(\cite{Calabi})
    Among all the Kähler potentials, the diastasis $D_{p}(z)$ is characterized by the fact that in every coordinate system $(z_{1}, ..., z_{n})$ centered at $p$, the $\infty \times \infty$ matrix of the coefficients $(a_{jk})$ in its power expansion around the origin 
    \[
    D_{p}(z)= \sum_{j,k=0}^{\infty} a_{jk}z^{m_{j}}\bar{z}^{m_{k}},
    \]
    satisfies $a_{j0}=a_{0j}=0$ for every nonnegative integer $j$.
\end{theorem}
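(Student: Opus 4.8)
The plan is to work entirely with the power expansion around $p$ (which we take to be the origin of the coordinate system) and to exploit the fact that two real‑analytic Kähler potentials for the same metric $g$ differ, on a simply connected neighbourhood of $p$, by a pluriharmonic term $f(z)+\overline{f(z)}$ with $f$ holomorphic. The whole argument then splits into three short steps: a description of pluriharmonic power series, an existence step, and a uniqueness step, followed by a direct identification of the distinguished potential with the diastasis.

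First I would record the elementary normal form for pluriharmonic series. If $u=\sum_{j,k}u_{jk}z^{m_j}\bar z^{m_k}$ is real analytic near the origin, then computing $\partial_i\partial_{\bar l}u=\sum_{j,k}u_{jk}\,m_{j,i}\,m_{k,l}\,z^{m_j-e_i}\bar z^{m_k-e_l}$ and matching the coefficient of $z^{m_j-e_i}\bar z^{m_k-e_l}$ shows that $i\partial\bar\partial u\equiv 0$ forces $u_{jk}=0$ whenever $m_j\neq 0$ and $m_k\neq 0$. Hence $u=\sum_{j}u_{j0}z^{m_j}+\sum_{k\ge 1}u_{0k}\bar z^{m_k}$, and imposing $u=\bar u$ gives $u_{0k}=\overline{u_{k0}}$ and $u_{00}\in\mathbb{R}$, so $u=f+\bar f$ with $f(z)=\tfrac12 u_{00}+\sum_{j\ge 1}u_{j0}z^{m_j}$. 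For the existence step I then start from an arbitrary real‑analytic potential $\Phi$ for $g$, with analytic continuation $\Phi(z,\bar w)$ and expansion $\Phi(z,\bar z)=\sum_{j,k}b_{jk}z^{m_j}\bar z^{m_k}$ (where $b_{kj}=\overline{b_{jk}}$), set $f(z)=\tfrac12 b_{00}+\sum_{j\ge1}b_{j0}z^{m_j}$, and put $\Phi_0:=\Phi-f-\overline{f}$. Since $f+\bar f$ is pluriharmonic, $\Phi_0$ is again a Kähler potential for $g$, and a direct inspection of coefficients shows $a_{00}=b_{00}-b_{00}=0$, $a_{j0}=b_{j0}-b_{j0}=0$ and $a_{0j}=b_{0j}-\overline{b_{j0}}=0$ for all $j\ge 1$. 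For uniqueness, if $\Phi_1,\Phi_2$ are two potentials both with $a_{j0}=a_{0j}=0$, then $u=\Phi_1-\Phi_2$ is pluriharmonic, so $u=h+\bar h$ with $h=\sum_j c_j z^{m_j}$; reading off the coefficients $a_{j0}$ of $u$ yields $c_j=0$ for $j\ge 1$ and $2\operatorname{Re}c_0=0$, hence $u\equiv 0$ and $\Phi_1=\Phi_2$.

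It remains to identify this distinguished potential with the diastasis. Substituting the expansion of $\Phi(z,\bar w)$ into $D_0(z)=\Phi(z,\bar z)+\Phi(0,0)-\Phi(z,0)-\Phi(0,\bar z)$, I note that $\bar w=0$ retains only the column $k=0$ and $z=0$ retains only the row $j=0$, so $D_0(z)=\sum_{j,k}b_{jk}z^{m_j}\bar z^{m_k}+b_{00}-\sum_j b_{j0}z^{m_j}-\sum_k b_{0k}\bar z^{m_k}$, whose coefficients visibly satisfy $a_{j0}=a_{0j}=0$ for all $j\ge 0$. Since $D_0$ is itself a Kähler potential for $g$ by property $4$ of the preceding proposition, the uniqueness step forces $D_0=\Phi_0$, which proves the characterization.

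I expect no deep obstacle here; the only points requiring care are purely bookkeeping. One must handle the constant $b_{00}$ correctly — it is counted once in $\Phi(0,0)$ but appears in both $\Phi(z,0)$ and $\Phi(0,\bar z)$, and its reality is used both in defining $f$ and in checking $a_{00}=0$ — and one must keep the Hermitian symmetry $b_{kj}=\overline{b_{jk}}$ consistently throughout. Finally, all the additions and subtractions are performed within a fixed neighbourhood of $p$ on which the original potential converges, so no convergence issue arises, since we only ever remove the holomorphic and antiholomorphic tails of an already convergent series.
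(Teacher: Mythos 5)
The paper does not prove this theorem --- it is quoted from Calabi's 1953 paper as a background result --- so there is no internal proof to compare against. Your argument is correct and is essentially Calabi's own: the gauge freedom in a local Kähler potential is exactly a pluriharmonic term $f+\bar f$, the normalization $a_{j0}=a_{0j}=0$ kills precisely that freedom, and a direct substitution of the expansion of $\Phi(z,\bar w)$ into $D_0(z)=\Phi(z,\bar z)+\Phi(0,0)-\Phi(z,0)-\Phi(0,\bar z)$ shows the diastasis is the normalized representative. The only point worth making explicit is the ``in every coordinate system'' clause: your computation is carried out in one fixed chart, but since $D_p$ is coordinate-independent (property~1 of the cited proposition) and your uniqueness step applies in each chart separately, the characterization in every chart follows at once.
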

In the following, we introduce the Calabi's criterion for Kähler immersion into the complex flat space $\mathbb{C}^{N}$. 
\begin{definition}
\label{def2:4}
We say that a complex manifold $(M,g)$ admits a local Kähler immersion into $\mathbb{C}^{N}$ if  given any point $p \in M$, there exists a neighbourhood $U$ of $p$ and a map $f: U \rightarrow \mathbb{C}^{N} $ such that:

\begin{itemize}
        \item[1.] f is holomorphic;
        
        \item[2.] f is isometric, i.e., $D_{p}^{M}(z)= \Sigma_{j=1}^{N} |f_{j}(p) - f_{j}(z)|^{2} $; 
        
        \item[3.] There exists $0< R< +\infty$ such that $\Sigma_{j=1}^{N} |f_{j}(z)|^{2} < R$.
    \end{itemize}

\end{definition}

\begin{definition}
    A real analytic Kähler manifold $(M,g)$ is resolvable of rank $N$ at $p \in M$ if $(a_{jk})$ is positive semidefinite of rank $N$.
\end{definition}
In the context of the Kähler immersion problem, several essential theorems exist. We utilize these theorems in the proof of our main results, which are presented in the remainder of this section.

Calabi's criterion for local Kähler immersion into $\mathbb{C}^{N}$ can be stated as follows  \cite[pages 9, 18]{Calabi}.
\begin{theorem}
\label{theo2:6}
    Let $(M,g)$ be a real analytic Kähler manifold. There exists a neighbourhood $U$ of a point $p$ that admits a Kähler immersion into $\mathbb{C}^{N}$  if and only if $(M,g)$ is resolvable of rank at most $N$ at $ p\in M$. Furthermore if the rank is exactly $N$, the immersion is full.
\end{theorem}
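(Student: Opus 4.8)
The plan is to prove both implications by passing to the power‑series expansion of the diastasis and recognising the defining array $(a_{jk})$ as a Gram matrix exactly when a Kähler immersion exists. I would fix holomorphic coordinates $z=(z_1,\dots,z_n)$ centered at $p$ and write $D_p(z)=\sum_{j,k\ge 0}a_{jk}z^{m_j}\bar z^{m_k}$; since $D_p$ is real valued the array $(a_{jk})$ is Hermitian, and by the characterization of the diastasis recalled above (with $m_0=(0,\dots,0)$) one has $a_{j0}=a_{0j}=0$ for all $j$, so the only data that matters is the submatrix indexed by $j,k\ge 1$.

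For the ``only if'' direction, given a Kähler immersion $f=(f_1,\dots,f_N)\colon U\to\mathbb{C}^N$ as in Definition~\ref{def2:4}, I would set $g_i:=f_i-f_i(p)$, so each $g_i$ is holomorphic with $g_i(p)=0$ and hence $g_i(z)=\sum_{\ell\ge 1}c_{i\ell}z^{m_\ell}$. Condition~2 of Definition~\ref{def2:4} reads $D_p(z)=\sum_{i=1}^N|g_i(z)|^2=\sum_{i=1}^N g_i(z)\overline{g_i(z)}$; expanding and comparing coefficients of $z^{m_j}\bar z^{m_k}$ gives $a_{jk}=\sum_{i=1}^N c_{ij}\overline{c_{ik}}$ for $j,k\ge 1$, i.e. $(a_{jk})_{j,k\ge 1}=C^{*}C$ with $C=(c_{i\ell})$ an $N\times\infty$ matrix. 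A Gram matrix is positive semidefinite and $\rank(C^{*}C)=\rank C\le N$, so $(M,g)$ is resolvable of rank at most $N$ at $p$. For the fullness claim I would observe that $\rank C<N$ forces a nontrivial linear relation $\sum_i\alpha_i c_{i\ell}=0$ for all $\ell\ge 1$, hence $\sum_i\alpha_i g_i\equiv 0$, so $f(U)$ lies in the proper affine subspace $f(p)+\{w\in\mathbb{C}^N:\sum_i\alpha_i w_i=0\}$; thus if the immersion is full then $\rank C=N$.

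For the ``if'' direction, assuming $(a_{jk})_{j,k\ge 1}$ is positive semidefinite of rank $N'\le N$, I would factor it as $(a_{jk})=C^{*}C$ with $C=(c_{i\ell})$ of size $N'\times\infty$ (spectral or Cholesky‑type decomposition) and set $f_i(z):=\sum_{\ell\ge 1}c_{i\ell}z^{m_\ell}$ for $1\le i\le N'$, $f_i\equiv 0$ for $N'<i\le N$. Then I would check three things. (i) Holomorphy of each $f_i$ near $p$: from $|c_{i\ell}|^2\le\sum_i|c_{i\ell}|^2=a_{\ell\ell}$, the convergence of $\sum_\ell a_{\ell\ell}|z^{m_\ell}|^2$ (the diagonal of the convergent expansion of $D_p$), and the polynomial growth of the number of multi‑indices of a given length, a Cauchy--Schwarz estimate bounds $\sum_\ell|c_{i\ell}|\,|z^{m_\ell}|$ on a fixed polydisc around $p$. (ii) Isometry: since $f_i(p)=0$, one gets $\sum_{i=1}^N|f_i(z)|^2=\sum_{j,k\ge 1}a_{jk}z^{m_j}\bar z^{m_k}=D_p(z)$, which is condition~2 of Definition~\ref{def2:4}, and as $D_p$ is a Kähler potential for $g$ this means $f$ pulls back the flat metric of $\mathbb{C}^N$ to $g$. (iii) Boundedness (condition~3): shrink $U$ to a relatively compact neighborhood, on which the continuous function $D_p=\sum_i|f_i|^2$ is bounded. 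This produces the required immersion, and when $N'=N$ the rows of $C$ are independent, so the $f_i$ are linearly independent holomorphic functions and $f(U)$ lies in no proper affine subspace, i.e. the immersion is full.

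The step I expect to be the main obstacle is the infinite‑dimensional bookkeeping rather than the algebra. In the ``only if'' direction one must justify that $a_{jk}=\sum_i c_{ij}\overline{c_{ik}}$ genuinely converges, which uses the locally uniform convergence of $\sum_i|g_i(z)|^2=D_p(z)$ together with Cauchy estimates; in the ``if'' direction one must know that the formally defined $f_i$ converge on one common neighborhood of $p$ and that the factorization $(a_{jk})=C^{*}C$ of a possibly infinite positive semidefinite Hermitian array can be carried out with exactly $N'=\rank(a_{jk})$ rows. Both reduce to the comparison $|c_{i\ell}|^2\le a_{\ell\ell}$ and the absolute convergence of the diastasis series near $p$, but that is where the care is needed; once these are in place, the coefficient matching and the equivalence ``Gram matrix $\Longleftrightarrow$ positive semidefinite of bounded rank'' are purely formal.
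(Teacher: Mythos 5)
The paper does not actually prove this statement: Theorem~\ref{theo2:6} is quoted from Calabi with a bare citation to pages 9 and 18 of \cite{Calabi}, so there is no in-paper argument to compare against. Your proof is, in substance, Calabi's original one --- identify the coefficient array $(a_{jk})$ of the diastasis with the Gram matrix $C^{*}C$ built from the Taylor coefficients of the (translated) component functions, read off positive semidefiniteness and $\mathrm{rank}\le N$ in one direction, and in the other direction factor the array and reconstruct the immersion, with fullness corresponding to the rank being exactly $N$ --- and it is correct; the two analytic points you single out (absolute convergence justifying the term-by-term coefficient matching, and the bound $|c_{i\ell}|^{2}\le a_{\ell\ell}$ yielding a common polydisc of convergence for the reconstructed $f_{i}$, including the case $N=\infty$ in $\ell^{2}(\mathbb{C})$) are precisely where care is needed, and your Cauchy--Schwarz treatment of them is the standard and adequate one.
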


The following theorem states that if there exists a local Kähler immersion around a point $p\in M$, then the same holds true for any other point \cite{Calabi}. This conclusion implies that a manifold is resolvable without specifying a particular point.

\begin{theorem}(Global character of resolvability).
\label{theo2:7}
If a real analytic connected Kähler manifold $(M,g)$ is resolvable of rank $N$ at a point $p \in M$, then it also is at any other point.
    
\end{theorem}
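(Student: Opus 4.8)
The statement to prove is Theorem \ref{theo2:7}, the global character of resolvability: if a real analytic connected Kähler manifold $(M,g)$ is resolvable of rank $N$ at a point $p$, then it is resolvable at every point.

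The plan is to argue in two stages: a \emph{local} propagation step and then a \emph{global} connectedness step. For the local step, fix a point $p$ at which $(M,g)$ is resolvable of rank $N$. By Theorem \ref{theo2:6}, this is equivalent to the existence of a Kähler immersion $f$ of a neighbourhood $U$ of $p$ into $\mathbb{C}^N$, with $D_p^M(z) = \sum_{j=1}^N |f_j(p) - f_j(z)|^2$ and the boundedness condition from Definition \ref{def2:4}. Now take any other point $q \in U$. I would show that $f$, possibly after composing with a rigid motion of $\mathbb{C}^N$ (a unitary transformation plus translation that sends $f(q)$ to $0$), realizes the diastasis $D_q^M$ as well. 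The key identity here is the cocycle-type relation for the diastasis coming from its very definition: writing the Kähler potential $\Phi$ for $g$ in a common coordinate chart, one has
\[
D_q(z) = D_p(z) - D_p(q) - 2\,\mathrm{Re}\sum_{j}\overline{(f_j(q)-f_j(p))}\,(f_j(z)-f_j(p)),
\]
which is exactly $\sum_{j=1}^N |f_j(q) - f_j(z)|^2$ up to the adjustment coming from the affine change. Hence the matrix of Taylor coefficients of $D_q$ in a coordinate system centered at $q$ is again positive semidefinite; its rank is at most $N$, and since the affine change of coordinates in $\mathbb{C}^N$ is invertible, the rank is exactly $N$. Thus $(M,g)$ is resolvable of rank $N$ at every point of $U$; equivalently, "resolvable of rank $N$ at $p$" is a property shared by all points in a whole neighbourhood of $p$.

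For the global step, consider the set $A = \{ q \in M : (M,g) \text{ is resolvable of rank } N \text{ at } q\}$. The local step shows $A$ is open. To see that $A$ is closed, I would argue by the real analyticity of the construction: if $q_i \to q_\infty$ with $q_i \in A$, then near $q_\infty$ there is a local immersion on a fixed neighbourhood (by the local step applied at some $q_i$ close enough to $q_\infty$, whose neighbourhood $U$ contains $q_\infty$), so $q_\infty \in A$ as well. In fact the local step already gives both openness and the fact that the complement is open — being resolvable of rank $N$ on $U$ is determined by any single point of $U$ — so $A$ and its complement are both open, and by connectedness of $M$, since $A \neq \emptyset$, we conclude $A = M$.

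The main obstacle is the bookkeeping in the local step: one must check carefully that the affine adjustment in $\mathbb{C}^N$ needed to recenter the immersion at $q$ is genuine (i.e., that $f(q) - f(p)$ and the linear terms behave correctly) and that it does not change the rank of the coefficient matrix. Concretely, the delicate point is verifying the displayed diastasis identity relating $D_q$ and $D_p$ via the components of $f$, and then observing that subtracting an affine (in $z$, anti-holomorphic-linear-plus-constant) term corresponds precisely to composing $f$ with an isometry of $\mathbb{C}^N$, so that Calabi's normalization $a_{j0} = a_{0j} = 0$ is restored at $q$ and the rank is preserved. Once this identity is in hand, the positive-semidefiniteness and the rank count are immediate, and the connectedness argument closes the proof. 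I should also note the subtlety that Definition \ref{def2:4}'s boundedness condition (item 3) must be re-established at $q$, but this follows because $f$ is bounded on a neighbourhood of $p$ and composing with a fixed isometry of $\mathbb{C}^N$ keeps it bounded on a (possibly smaller) neighbourhood of $q$.
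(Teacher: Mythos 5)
The paper itself offers no proof of this theorem; it is quoted from Calabi, so your argument can only be judged on its own terms. Your open--closed strategy on the set $A=\{q\in M : (M,g) \text{ is resolvable of rank } N \text{ at } q\}$ is the right skeleton, and the local (openness) step is essentially sound: restricting the immersion $f\colon U\to\mathbb{C}^{N}$ near $q$ and composing with a rigid motion of $\mathbb{C}^{N}$ does realize $D_q$, by the hereditary property of the diastasis. Two small repairs are needed there. First, the displayed identity has a sign slip: the correct relation is $D_q(z)=D_p(z)+D_p(q)-2\,\mathrm{Re}\sum_{j}\overline{(f_j(q)-f_j(p))}\,(f_j(z)-f_j(p))$, with $+D_p(q)$, which is precisely what makes it equal to $\sum_j|f_j(z)-f_j(q)|^2$. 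Second, ``the affine change of coordinates is invertible'' is not the reason the rank stays exactly $N$ at $q$: you need that the restriction of a full immersion to a smaller connected open set is still full (if $f(V)$ lay in a proper affine subspace then so would $f(U)$, by the identity theorem), plus an argument that the rank at $q$ cannot drop below $N$ --- e.g.\ otherwise the same propagation run from $q$, combined with the open--closed argument, would force rank $<N$ back at $p$.

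The genuine gap is in the closedness half. Your sequential argument needs the neighbourhood $U_i$ produced by the local step at $q_i$ to eventually contain $q_\infty$; nothing controls the size of $U_i$, which is tied to the domain of convergence of the power series of $D_{q_i}$, so this does not follow. The fallback assertion that the complement of $A$ is open ``because resolvability of rank $N$ on $U$ is determined by any single point of $U$'' is essentially the local form of the statement being proved and is not delivered by your local step, which only propagates resolvability \emph{outward} from a point already known to be resolvable. The missing ingredient is the one Calabi actually uses: the coefficients $a_{jk}(q)$ of the diastasis centred at $q$ depend continuously (indeed real-analytically) on $q$, and ``positive semidefinite of rank at most $N$'' is a closed condition on the entries (nonnegativity of all principal minors together with the vanishing of all $(N+1)\times(N+1)$ minors), so the set of points resolvable of rank at most $N$ is closed; the exact-rank claim then follows by applying the same open--closed argument to rank at most $N-1$. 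Without this continuity argument, or some substitute for it, the proof does not close.
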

Let $F(N, b)$ be an $N$-dimensional complex space form of holomorphic sectional curvature $4b$. we denote by $D_{0}(z)$ the diastasis of $g$ at the origin and consider the power expansion around the origin of the function $(e^{bD_{0}(z)}-1)/b$:
\[
\frac{e^{bD_{0}(z)}-1}{b}= \sum_{j,k=0}^{\infty} s_{jk}z^{m_{j}}\bar{z}^{m_{k}}.
\]
The following definition extends definition \ref{def2:4} to the case where $b\neq 0$: 
\begin{definition}
We say that a complex manifold $(M,g)$ admits a local Kähler immersion into $F(N,b)$ if  given any point $p \in M$, there exists a neighbourhood $U$ of $p$ and a map $f: U \rightarrow  $F(N,b)$ $ such that:

\begin{itemize}
        \item[1.] f is holomorphic;
        
        \item[2.] f is isometric, i.e., $D_{p}^{M}(z)= D_{f(p)}^{b}(f(z)) $; 
        
        \item[3.] There exists $0< R< +\infty$ such that $\Sigma_{j=1}^{N} |f_{j}(z)|^{2} < R$.
    \end{itemize}

\end{definition}

\begin{definition}
    A real analytic Kähler manifold $(M,g)$ is $b$-resolvable of rank $N$ at $p\in M$ if the matrix $(s_{jk})$ is semipositive definite of rank $N$.
\end{definition}

In particular, $(M, g)$ is $1$-resolvable of rank $N$ at $p$ if the matrix of coefficients $(b_{jk})$ given by
\[
e^{D_{0}(z)} -1 = \sum_{j,k=0}^{\infty} b_{jk}z^{m_{j}}\bar{z}^{m_{k}},
\]
is positive semidefinite of rank $N$. 
Similarly $(M,g)$ is $-1$-resolvable of rank $N$ at $p$ if the matrix of coefficients $(c_{jk})$ given by
\[
1- e^{-D_{0}(z)} = \sum_{j,k=0}^{\infty} c_{jk}z^{m_{j}}\bar{z}^{m_{k}},
\]
is positive semidefinite of rank $N$. 

The significance  of the diastasis function is based on the following theorem, which summarizes results shown by Calabi in his renowned work \cite{Calabi}.

\begin{theorem}
\label{theo2:10}
    Let $(M,g)$ be a real analytic Kähler manifold. Then $(M,g)$ admits a local Kähler immersion into $\mathbb{C}P^{N}$(respectively $\mathbb{C}H^{N}$) if and only if the matrix $(b_{jk})$(respectively $(c_{jk})$) defined above  is positive semidefinite of rank at most N .
\end{theorem}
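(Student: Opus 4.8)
The plan is to reduce both assertions to Calabi's flat-space criterion, Theorem~\ref{theo2:6}, by replacing $g$ with an auxiliary Kähler metric whose diastasis at a point $p$ is exactly the function defining $(b_{jk})$, respectively $(c_{jk})$. Fix $p\in M$ and a coordinate system centered at $p$, and write $D_0(z)=\sum_{j,k\ge 0}a_{jk}z^{m_j}\bar z^{m_k}$ for the diastasis of $g$ at $p$. By the characterization of the diastasis recalled above, $a_{00}=a_{j0}=a_{0j}=0$, so every monomial occurring in $D_0$ has positive degree in $z$ and positive degree in $\bar z$; this property survives sums, products and the exponential series, so it holds for $e^{D_0(z)}-1$ and for $1-e^{-D_0(z)}$, both of which in addition vanish at $z=0$. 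Hence, once we check that each of these is the Kähler potential of a genuine metric near $p$, the same characterization forces it to be the diastasis of that metric, with coefficient matrix $(b_{jk})$, respectively $(c_{jk})$.

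For $\mathbb{C}P^N$, put $\hat\Phi:=e^{D_0}-1$. A short computation gives $\partial_i\partial_{\bar j}\hat\Phi=e^{D_0}(\partial_i\partial_{\bar j}D_0+\partial_i D_0\,\partial_{\bar j}D_0)$, which is positive definite because $\partial_i\partial_{\bar j}D_0$ is the metric $g$ and the remaining term is positive semidefinite of rank at most $1$; so $\hat\Phi$ is the potential of a metric $\hat g$ near $p$, hence, by the previous paragraph, its diastasis, with coefficient matrix $(b_{jk})$. Theorem~\ref{theo2:6} then says $(M,\hat g)$ admits a local Kähler immersion into $\mathbb{C}^N$ if and only if $(b_{jk})$ is positive semidefinite of rank at most $N$. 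It remains to match this with immersions of $(M,g)$ into $\mathbb{C}P^N$. In an affine chart the Fubini--Study metric has Kähler potential, and diastasis at the origin, equal to $\log(1+\sum_{j=1}^N|w_j|^2)$; so a local Kähler immersion $f=(f_1,\ldots,f_N)$ of $(M,\hat g)$ into $\mathbb{C}^N$ with $f(p)=0$, that is, with $\sum_j|f_j(z)|^2=\hat\Phi(z)=e^{D_0(z)}-1$, corresponds to the holomorphic map $\tilde f:=[1:f_1:\cdots:f_N]$ into $\mathbb{C}P^N$, for which $\tilde f^{*}g_{FS}$ has potential $\log(1+\sum_j|f_j(z)|^2)=D_0(z)$ and hence equals $g$; conversely any local Kähler immersion $(M,g)\to\mathbb{C}P^N$, composed with an isometry of $\mathbb{C}P^N$ carrying $p$ to $[1:0:\cdots:0]$, is of this shape. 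Shrinking the neighbourhood keeps $\sum_j|f_j|^2=e^{D_0}-1$ bounded, so the third condition in the definition of immersion holds. This settles the $\mathbb{C}P^N$ case.

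The case of $\mathbb{C}H^N$ is parallel, with $\hat\Phi$ replaced by $\check\Phi:=1-e^{-D_0}$ and the affine chart replaced by the unit-ball model, whose hyperbolic metric has Kähler potential and diastasis at the center equal to $-\log(1-\sum_{j=1}^N|w_j|^2)$. Now $\partial_i\partial_{\bar j}\check\Phi=e^{-D_0}(\partial_i\partial_{\bar j}D_0-\partial_i D_0\,\partial_{\bar j}D_0)$, so positivity is no longer automatic; but $D_0$ has no linear terms, so $\partial_i D_0$ vanishes at $p$ and the rank-one correction is dominated by $g$ on a small enough neighbourhood of $p$, on which $\check\Phi$ is a Kähler potential. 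Shrinking is harmless, since the entries of $(c_{jk})$ depend only on the germ of $D_0$ at $p$. Thus $\check\Phi$ is the diastasis of the metric $\check g$ it defines, with coefficient matrix $(c_{jk})$, and Theorem~\ref{theo2:6} gives a local Kähler immersion $(M,\check g)\to\mathbb{C}^N$ if and only if $(c_{jk})$ is positive semidefinite of rank at most $N$. The correspondence with immersions of $(M,g)$ into $\mathbb{C}H^N$ is as before: a map $(f_1,\ldots,f_N)$ with $\sum_j|f_j(z)|^2=\check\Phi(z)=1-e^{-D_0(z)}$ automatically has $\sum_j|f_j|^2<1$, so it lands in the ball and the third condition holds with no further shrinking, and it pulls the hyperbolic potential back to $-\log(1-\sum_j|f_j|^2)=D_0$; conversely a local Kähler immersion into $\mathbb{C}H^N$, normalized by an ambient isometry so that $p$ maps to the center, is of this form. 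Finally, the global character of resolvability (Theorem~\ref{theo2:7}) applied to $\hat g$, respectively $\check g$, makes the condition independent of $p$, matching the point-free phrasing of the statement.

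The only genuinely non-formal ingredient is the one already packaged inside Theorem~\ref{theo2:6}: recovering convergent, bounded holomorphic functions $f_j$ from a positive-semidefinite factorization of the infinite matrix $(b_{jk})$, respectively $(c_{jk})$, rather than a merely formal power series; since we invoke Theorem~\ref{theo2:6} as stated, this is handled there. In the reduction itself the only delicate points are the positivity of the two auxiliary complex Hessians (immediate for $\mathbb{C}P^N$; for $\mathbb{C}H^N$ using the vanishing of $\partial D_0$ at $p$ and a shrink of the neighbourhood) and the bookkeeping that transfers the isometry and boundedness conditions through the explicit space-form potentials and the base-point normalization by ambient isometries. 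I expect this bookkeeping — not any new analytic difficulty — to be the main obstacle in a complete write-up.
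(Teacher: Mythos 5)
The paper offers no proof of this statement: it is quoted as Calabi's criterion with a citation to Calabi's original article, so there is nothing internal to compare against. Your argument --- reducing to the flat criterion of Theorem~\ref{theo2:6} via the auxiliary potentials $e^{D_0}-1$ and $1-e^{-D_0}$, checking through the vanishing of the pure $z$ and pure $\bar z$ terms that these are themselves diastases of genuine local metrics, and matching immersions through the explicit affine-chart and ball-model diastases $\log\bigl(1+\sum_j|w_j|^2\bigr)$ and $-\log\bigl(1-\sum_j|w_j|^2\bigr)$ --- is precisely the standard proof in the cited literature, and the points you flag as delicate (positivity of the two complex Hessians, the base-point normalization by ambient isometries, and the boundedness condition) are handled correctly.
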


Similarly to the case of flat ambient space, we can state the following theorem.

\begin{theorem}(Global character of b-resolvability).
\label{theo2:11}
If a real analytic connected Kähler manifold $(M,g)$ is b-resolvable of rank $N$ at a point $p \in M$, then it also is at any other point.
\end{theorem}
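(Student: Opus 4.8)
The plan is to follow Calabi's argument for the flat ambient case (Theorem \ref{theo2:7}): for each $N\in\{0,1,2,\dots\}\cup\{\infty\}$ I would set
\[
A_N:=\bigl\{\,q\in M:\ (M,g)\text{ is $b$-resolvable of rank at most }N\text{ at }q\,\bigr\},
\]
and show that every $A_N$ is both open and closed. Since $M$ is connected, each $A_N$ is then $\varnothing$ or $M$, and the theorem drops out by comparing $A_N$ with $A_{N-1}$ at the given point $p$.

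Two facts feed the argument. (i) By Calabi's criterion — Theorem \ref{theo2:10}, complemented by Theorem \ref{theo2:6} for $b=0$ — one has $q\in A_N$ if and only if some neighbourhood of $q$ admits a local Kähler immersion into the complex space form $F(N,b)$. (ii) Working in a holomorphic chart around any given point and using, for each base point $q$, the translated centred coordinate $w=z-z(q)$ — legitimate because the rank of the resolving matrix is independent of the chosen centred coordinates (two such matrices are Hermitian-congruent under the induced change of variables) — the coefficients $s^{(q)}_{jk}$ of $\bigl(e^{bD_q(w)}-1\bigr)/b$ are obtained by differentiating the analytically continued two-point function $\bigl(e^{bD(z,w)}-1\bigr)/b$ along the diagonal; since that function is real analytic on a neighbourhood of the diagonal, each entry $q\mapsto s^{(q)}_{jk}$ is a continuous (indeed real analytic) function of $q$.

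Openness of $A_N$ is the soft direction and uses only (i): if $q\in A_N$ and $f\colon U\to F(N,b)$ is a local Kähler immersion with $q\in U$, then for any $q'\in U$ the restriction of $f$ to a small neighbourhood of $q'$ is still holomorphic, still satisfies the diastasis identity, and still has bounded image, hence witnesses $q'\in A_N$; thus $U\subseteq A_N$. Closedness uses (ii). For finite $N$ one writes
\[
M\setminus A_N=\bigl\{q:\rank\bigl(s^{(q)}_{jk}\bigr)\ge N+1\bigr\}\ \cup\ \bigl\{q:\bigl(s^{(q)}_{jk}\bigr)\text{ is not positive semidefinite}\bigr\},
\]
and both sets are open: $\rank\ge N+1$ means some $(N+1)\times(N+1)$ minor is nonzero, an open condition by continuity of the entries, while failure of positive semidefiniteness means $v^{*}\bigl(s^{(q)}_{jk}\bigr)v<0$ for some finitely supported vector $v$, again open by continuity (equivalently, positive semidefiniteness is a closed condition). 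For $N=\infty$ one simply has $A_\infty=\{q:(s^{(q)}_{jk})\text{ is positive semidefinite}\}$, which is closed for the same reason. Hence every $A_N$ is clopen.

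It remains to read off the conclusion. If $(M,g)$ is $b$-resolvable of rank $N$ at $p$ with $N$ finite, then $p\in A_N$ forces $A_N=M$, so $(s^{(q)}_{jk})$ is positive semidefinite of rank at most $N$ for every $q$; and $p\notin A_{N-1}$ forces $A_{N-1}=\varnothing$, so, in view of the previous clause, $\rank(s^{(q)}_{jk})\ge N$ for every $q$. Together these say $(M,g)$ is $b$-resolvable of rank exactly $N$ at every point. The case $N=\infty$ is identical: $p\in A_\infty$ gives $A_\infty=M$, and $p\notin A_{N'}$ for all finite $N'$ gives $A_{N'}=\varnothing$ for all finite $N'$, so $(s^{(q)}_{jk})$ is positive semidefinite of infinite rank everywhere. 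The only step that needs genuine care is the continuity claim in (ii): it rests on the global real analyticity of the complexified two-point diastasis near the diagonal together with the coordinate-independence of the rank. Everything after that is the soft topological argument already used for Theorem \ref{theo2:7}, and I would expect (ii) to be essentially the only obstacle.
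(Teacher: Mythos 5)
The paper does not prove this statement: it is quoted as a known result of Calabi, stated by analogy with Theorem \ref{theo2:7} and left without argument. Your clopen reconstruction is essentially Calabi's original proof and is correct: openness of $A_N$ via the restriction of a local Kähler immersion into $F(N,b)$ to a smaller neighbourhood (using that the diastasis is hereditary under holomorphic isometric immersions, so the immersion criterion applies at the new base point), and closedness via real-analytic dependence of the coefficients $s^{(q)}_{jk}$ on $q$ together with the facts that positive semidefiniteness and ``all $(N+1)\times(N+1)$ minors vanish'' are closed conditions. You correctly isolate the one point needing care, namely that the matrix computed in the translated coordinates $w=z-z(q)$ is Hermitian-congruent to the canonical one by a column-finite change-of-basis matrix, so positive semidefiniteness and rank are unaffected; with that in hand the comparison of $A_N$ with $A_{N-1}$ (and the $N=\infty$ case via $A_\infty$ and all finite $A_{N'}$) closes the argument as you describe.
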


The preceding theorem states that if a local Kähler immersion into $F(N, b)$ around a point $p \in M$ exists, then the same holds true for any other point. Based on this result, we conclude that a manifold is $b$-resolvable without specifying the point. A Kähler metric $g$ on a complex manifold $M$ is projectively induced if $(M,g)$ admits a holomorphic and isometric immersion into the complex projective space $\mathbb{C}P^{N}$, of dimernsion $N \leq +\infty$, endowed with its Fubini-Study metric $g_{FS}$, i.e., if there exists a holomorphic map $f: M 	\rightarrow  \mathbb{C}P^{N}$ such that $f^{*}g_{FS}=g$.
In particular, if $(M, g)$ is 1-resolvable, we also say that $g$ is projectively induced.

In this work, we are using the definition of ALE Kähler manifold according to  \cite{Joyce}. These spaces constitute the focus of our interest in this paper. A complete connected non-compact Riemannian $n$-manifold $(M, g)$ is said to be asymptotically locally Euclidean (or ALE) if the complement of a compact set $K$ consists of finitely many components, each of which is diffeomorphic to a quotient $(\mathbb{R}^{n} - D^{n}) / \Gamma_{j}$, where $\Gamma_{j} \subset O(n)$ is a finite subgroup which acts freely on the unit sphere, in such a way that $g$ again becomes the Euclidean metric plus error terms that fall off sufficiently rapidly at infinity.
In the folllowing, we have the actual formula for the mass in Kähler geometry. Since $M$ is a smooth manifold, one can define the compactly supported de Rham cohomology $H_{c}^{k}(M)$, as well as the usual de Rham cohomology. If $M$ is a complex manifold, it is in particular oriented, and Poincare duality therefore gives us an isomorphism $H_{c}^{2}(M) \cong [H^{2m-2}(M)]^{*}$. On the other hand, there is a natural map $H_{c}^{2}(M) \longrightarrow H^{2}(M)$ induced by the inclusion of compactly supported forms into all differential forms, and in the ALE setting, this map is actually an isomorphism. We may therefore define
\[
\clubsuit : H^{2}(M) \longrightarrow H_{c}^{2}(M)
\]
 to be its inverse. Using this notation, the explicit formula for the mass is given by the following:
 
 \begin{theorem}(\cite{Hein})
Any ALE Kähler manifold $(M,g,J)$ of complex dimension $m$ has mass given 
by\\
$$\mathfrak{m}(M,g)=-\frac{< \clubsuit (c_{1}), [\omega]^{m-1}>}{(2m-1)\pi^{m-1}} + \frac{(m-1)!}{4(2m-1)\pi^{m}}  \int_{M} s_{g} d\mu_{g},$$\\
 where $s_{g}$ and $d\mu_{g}$ are respectively the scalar curvature and volume form of $g$, 
while $c_{1}=c_{1}(M,J)\in H^{2}(M)$ is the first Chern class of the complex structure, $[\omega] \in H^{2}(M)$ is the Kähler class of $g$, and $<,>$ is the duality pairing between $H_{c}^{2}(M)$ and $H^{2m-2}(M).$
 \end{theorem}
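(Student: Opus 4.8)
The plan is to obtain the formula by computing the flux of the metric perturbation across large coordinate spheres on each ALE end in two different ways. Under the weak fall-off conditions that make it well defined (together with the customary integrability assumption on $s_g$), the mass admits the ADM-type expression
\[
\mathfrak{m}(M,g)=\frac{1}{(2m-1)\,\omega_{2m-1}}\,\lim_{\rho\to\infty}\oint_{S_\rho}\sum_{i,j}\bigl(\partial_i g_{ij}-\partial_j g_{ii}\bigr)\,n^j\,dA ,
\]
where $\omega_{2m-1}=2\pi^m/(m-1)!$ is the volume of the unit $(2m-1)$-sphere, $S_\rho$ runs over the large spheres in the ends (each modeled on $(\mathbb{R}^{2m}\setminus D)/\Gamma_j$), and $n$, $dA$ are the outward normal and induced measure. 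The first step is local. Since each end deformation retracts onto $S^{2m-1}/\Gamma_j$, whose real second cohomology vanishes when $m\geq 2$ (the case $m=1$ being handled directly), on the end the Kähler form can be written $\omega=\omega_0+i\partial\bar\partial\phi$, with $\omega_0$ the flat form and $\phi$ of controlled decay. For the perturbation $h=g-g_0$ one then has $\operatorname{tr}_{g_0}h=\Delta_0\phi$ and, since the flat complex structure is parallel, the Kähler identity $\partial^a h_{ab}=\tfrac12\,\partial_b(\operatorname{tr}_{g_0}h)$. Substituting this into the flux integrand makes the two terms combine, so that
\[
\mathfrak{m}(M,g)=\frac{-1}{2(2m-1)\,\omega_{2m-1}}\,\lim_{\rho\to\infty}\oint_{S_\rho}\partial_n\bigl(\operatorname{tr}_{g_0}h\bigr)\,dA .
\]

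The second step reinterprets the same flux via Chern--Weil theory. The Ricci form $\rho_g=-i\partial\bar\partial\log\det(g_{k\bar l})$ is closed, represents $2\pi c_1(M,J)$ in ordinary de Rham cohomology, and satisfies $\rho_g\wedge\omega^{m-1}=\tfrac{(m-1)!}{2}\,s_g\,d\mu_g$; hence $\int_M\rho_g\wedge\omega^{m-1}=\tfrac{(m-1)!}{2}\int_M s_g\,d\mu_g$. Pick a compactly supported $2$-form $\gamma$ representing $\clubsuit(c_1)\in H^2_c(M)$. Since $[\gamma]=c_1$ in $H^2(M)$, we may write $2\pi\gamma-\rho_g=d\beta$ for a global $1$-form $\beta$; because $\gamma$ vanishes on the ends while $\rho_g$ decays there, $\beta$ can be chosen to decay on the ends as well (the residual ambiguity being a closed, hence exact, $1$-form on $S^{2m-1}/\Gamma_j$). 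Stokes' theorem then gives
\[
\frac{(m-1)!}{2}\int_M s_g\,d\mu_g=2\pi\,\langle\clubsuit(c_1),[\omega]^{m-1}\rangle-\lim_{\rho\to\infty}\oint_{S_\rho}\beta\wedge\omega^{m-1}.
\]
On the ends $d\beta=-\rho_g=\tfrac12\,dd^c\log\det(g_{k\bar l})$, so up to an exact form $\beta=\tfrac12\,d^c\log\det(g_{k\bar l})$; using $\log\det(g_{k\bar l})=\operatorname{tr}_{g_0}h+O(|h|^2)$, $\omega=\omega_0+O(|h|)$ on the ends, and the identity $d^c u\wedge\omega_0^{m-1}=(m-1)!\,\ast_{g_0}\,du$ valid for any function $u$, the boundary term is seen to equal $\tfrac{(m-1)!}{2}\lim_{\rho\to\infty}\oint_{S_\rho}\partial_n(\operatorname{tr}_{g_0}h)\,dA$ — exactly the flux produced in the first step.

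Combining the two displays eliminates the flux term; substituting $\omega_{2m-1}=2\pi^m/(m-1)!$ and rearranging yields
\[
(2m-1)\,\pi^m\,\mathfrak{m}(M,g)=\frac{(m-1)!}{4}\int_M s_g\,d\mu_g-\pi\,\langle\clubsuit(c_1),[\omega]^{m-1}\rangle,
\]
which is the stated formula. The main obstacle is analytic rather than formal: one must verify that every error term discarded above — the lower-order part of the ADM integrand, the $O(|h|^2)$ remainder in $\log\det(g_{k\bar l})$, and the $O(|h|)$ correction to $\omega^{m-1}$ — contributes nothing in the limit $\rho\to\infty$ under only the minimal decay for which the mass is defined, and that the primitive $\beta$ really can be chosen with sufficient decay on the ends; this is precisely where the weak fall-off hypotheses are used. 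A secondary task is to justify the cohomological framework in the ALE category (the isomorphism $H^2_c(M)\cong H^2(M)$, the map $\clubsuit$, and convergence of $\int_M\gamma\wedge\omega^{m-1}$), and to dispose of the complex one-dimensional case separately.
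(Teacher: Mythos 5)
The paper does not prove this statement: it is Theorem A of Hein--LeBrun, quoted verbatim with the citation \cite{Hein}, so there is no in-paper argument to compare yours against. Judged against the actual proof in \cite{Hein}, your outline follows essentially the same strategy --- write the mass as the ADM flux of $h=g-g_0$ over large spheres in the ends, use the Kähler condition to reduce the integrand to $-\tfrac12\,\partial_n(\operatorname{tr}h)$, and independently express $2\pi\,\langle\clubsuit(c_1),[\omega]^{m-1}\rangle-\int_M\rho_g\wedge\omega^{m-1}$ as a boundary term at infinity via a primitive of $\rho_g$ on the ends, then match the two fluxes. The skeleton is the right one and the final algebra reproduces the stated constants.

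Two caveats. First, the constant-tracking is not internally consistent: the standard ADM prefactor in real dimension $n=2m$ is $\tfrac{1}{2(n-1)\omega_{n-1}}=\tfrac{(m-1)!}{4(2m-1)\pi^m}$, not the $\tfrac{1}{(2m-1)\omega_{2m-1}}$ you start from, and $\log\det(g_{k\bar l})$ equals the \emph{complex} trace of $h$ plus $O(|h|^2)$, i.e.\ $\tfrac12\operatorname{tr}_{g_0}h$ rather than $\operatorname{tr}_{g_0}h$ (likewise $\operatorname{tr}_{g_0}h=\tfrac12\Delta_0\phi$). These two factor-of-two slips cancel in your final display, which is why you land on the correct formula, but as written neither intermediate identity is right. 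Second, and more importantly, everything you defer to the last paragraph is where the actual content of Hein--LeBrun lies: producing a primitive $\beta$ of $-\rho_g$ on each end with decay $O(r^{-\tau-1})$ from decay $O(r^{-\tau-2})$ of $\rho_g$ (this uses $H^1(S^{2m-1}/\Gamma_j;\mathbb{R})=H^2(S^{2m-1}/\Gamma_j;\mathbb{R})=0$ plus a homotopy/Poincaré-lemma construction with weights, not just an abstract cohomology vanishing); showing that under only the weak fall-off $\tau>m-1$ together with $s_g\in L^1$ all discarded terms --- the quadratic remainder in $\log\det$, the $O(|h|)$ correction to $\omega^{m-1}$, the lower-order part of the ADM integrand --- have vanishing flux in the limit; and verifying convergence of $\int_M\rho_g\wedge\omega^{m-1}$. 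As a proof, therefore, the proposal is an accurate roadmap rather than a complete argument; since the paper itself only cites the result, that is arguably all that is called for here, but the missing analysis should not be understated.
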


\section{Kähler Immersions of ALE Kähler Metrics}
\label{sec:sec1}
\hspace{10pt} In this section, we present our main results. The first example of Ricci-flat (non-flat) Kähler metric constructed on a non-compact manifold is the Taub-NUT metric described by Claude Lebrun in \cite{Lebrun}. This is a $1$-parameter family of complex Kähler metrics on $\mathbb{C}^{2}$ defined by the Kähler potential
$$\Phi_{m}(u,v)=u^{2}+ v^{2}+ m(u^{4}+v^{4}),$$\\
where $u$ and $v$ are implicity given by $|z_{1}|=e^{m(u^{2}-v^{2})}u$, $|z_{2}|=e^{m(v^{2}-u^{2})}v$. In (\cite{Loi4}, page 522), Andrea Loi, Michela Zedda and Fabio Zuddas  investigate the projectively induced problem for the Taub-NUT metric on $\mathbb{C}^{2}$. 
\begin{lemma}(\cite{Loi4})
    Let $m \geq 0 $, $g_{m}$  be the Taub-NUT metric on $\mathbb{C}^{2}$ and $\alpha$ be a positive real number. Then $\alpha g_{m}$ is not projectively induced for $ m >\frac{ \alpha}{2}.$
\end{lemma}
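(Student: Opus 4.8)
The plan is to apply Calabi's criterion in the form of Theorem \ref{theo2:10}: the metric $\alpha g_m$ is projectively induced if and only if it is $1$-resolvable, i.e. the infinite Hermitian matrix $(b_{jk})$ of Taylor coefficients of $e^{D_0^{\alpha g_m}(z)}-1$ at a point is positive semidefinite (the rank being allowed to equal $+\infty$). Since the diastasis is built linearly from the K\"ahler potential, scaling the metric scales the diastasis, $D_0^{\alpha g_m}=\alpha\,D_0^{g_m}$, and by Theorem \ref{theo2:11} we may compute at any convenient point, which we take to be the origin $0\in\mathbb{C}^2$. Thus it suffices to expand $e^{\alpha D_0(z)}-1$ (with $D_0$ the diastasis of $g_m$ at $0$) far enough to exhibit a single negative diagonal entry of $(b_{jk})$ when $m>\alpha/2$.

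First I would identify the diastasis of $g_m$ at the origin. Writing $x=|z_1|^2$, $y=|z_2|^2$ and $a=u^2$, $b=v^2$, the given potential reads $\Phi_m=a+b+m(a^2+b^2)$, where $a,b$ are determined implicitly by $x=e^{2m(a-b)}a$ and $y=e^{2m(b-a)}b$; in particular $\Phi_m(z,\bar z)$ is a power series in $x,y$ with no constant, holomorphic or antiholomorphic terms, so its analytic continuation satisfies $\Phi_m(z,0)=\Phi_m(0,\bar z)=\Phi_m(0,0)=0$, whence $D_0(z)=\Phi_m(z,\bar z)$ by the definition of the diastasis. Next I would invert the implicit relations as formal power series: from $a=x\,e^{-2m(a-b)}$ one gets $a=x-2mx^2+2mxy+O(3)$ and, symmetrically, $b=y-2my^2+2mxy+O(3)$, and substituting into $\Phi_m$ yields
\[
D_0(z)=(x+y)-m\,(x^2+y^2)+4m\,xy+O(3),\qquad x=|z_1|^2,\ y=|z_2|^2 ,
\]
where $O(3)$ collects terms of degree $\geq 3$ in $(x,y)$.

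Then I would exponentiate and read off the low-order coefficients:
\[
e^{\alpha D_0}-1=\alpha D_0+\tfrac12(\alpha D_0)^2+O(3)=\alpha(x+y)+\big(\tfrac12\alpha^2-\alpha m\big)(x^2+y^2)+(\alpha^2+4\alpha m)\,xy+O(3).
\]
The diagonal entry of $(b_{jk})$ indexed by the monomial $z_1^2$ — that is, the coefficient of $z_1^2\bar z_1^2=x^2$ — therefore equals $\tfrac12\alpha^2-\alpha m=\alpha\big(\tfrac{\alpha}{2}-m\big)$. A positive semidefinite matrix has nonnegative diagonal entries, so if $m>\alpha/2$ this entry is strictly negative, $(b_{jk})$ fails to be positive semidefinite, and by Theorem \ref{theo2:10} the metric $\alpha g_m$ is not projectively induced, which is the assertion.

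The only genuine computation is the power-series inversion of the transcendental change of variables $x=e^{2m(a-b)}a$, $y=e^{2m(b-a)}b$; everything else is formal. Since the negative coefficient we need involves $a$ (hence $D_0$) only to second order, where the inversion is immediate, even this step is short — the one point demanding care is the verification that the given $\Phi_m$ is already the diastasis, so that no holomorphic correction terms contaminate the expansion.
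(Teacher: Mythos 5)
Your argument is correct: the expansion $D_0=x+y-m(x^2+y^2)+4mxy+O(3)$ with $x=|z_1|^2$, $y=|z_2|^2$ checks out (and agrees, on the slice $z_2=0$, with the coefficient $-m$ of $|z|^4$ computed in the paper's Example), and reading off the diagonal entry $\alpha\bigl(\tfrac{\alpha}{2}-m\bigr)$ of the matrix for $e^{\alpha D_0}-1$ correctly rules out positive semidefiniteness when $m>\alpha/2$. The paper itself only cites this lemma from \cite{Loi4} without proof, but your method is exactly the one the paper demonstrates for the companion Example on immersions into $\mathbb{C}^N$, adapted in the standard way (exponentiating the diastasis) to the projectively induced criterion of Theorem \ref{theo2:10}.
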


In the following, we provide a proof of the Kähler immersion problem for the Taub-NUT metric into $\mathbb{C}^{N}$, which appeared in \cite{Loi4}. However, the details are not mentioned there.
    \begin{example}
Prove that if the Taub-NUT metric ($\mathbb{C}^{2}, g_{m}$) admits a Kähler immersion into $\mathbb{C}^{N}$, $N  \leq  \infty$, then $m=0$.  
\end{example}

\begin{proof}
Assume that $( \mathbb{C}^{2},g_{m})$ adimts a Kähler immersion into $\mathbb{C}^{N}$. Then, it admits also a Kähler immersion into $\mathbb{C}^{N}$ of the Kähler submanifold of $( \mathbb{C}^{2}, \omega_{m})$ defined by $z_{2}=0$, $z_{1}=z$, endowed with the induced metric, having potential $\Phi_{m}=u^{2}+mu^{4}$. Then we have\\
\[
|z|^{2}=e^{2mu^{2}}u^{2}=(1+2mu^{2}+ O(u^{4}))u^{2}=u^{2}+2mu^{4}+O(u^{6}),
\]

\[
u^{2}=\frac{\sqrt{1+8m(|z|^{2}+O(|z|^{6}))}-1}{4m} =|z|^{2}-2m|z|^{4}+O(|z|^{6}).
\]
The power expansion of the diastasis function around the origin is defined as follows:\\
\[
\Phi_{m} = u^{2}+mu^{4}= |z|^{2}-2m|z|^{4}+O(|z|^{6}) + m (|z|^{2}-2m|z|^{4}+O(|z|^{6}))^{2}
\]

$$=|z|^{2}-m|z|^{4}-4m^{2}|z|^{6}+4m^{3}|z|^{8}+m(O(|z|^{12}) + O(|z|^{8})+ O(|z|^{10})).$$\\
According to Calabi’s criterion we have $-m\geq 0$, which implies that $m=0$.
    \end{proof}
    
Inspired by the above example, we aim to investigate the special case of  ALE Kähler metric without assuming any specific conditions on the curvature. The metric we describe in this paper is an example of ALE Kähler metric with Kähler potential $\Phi=\frac{1}{2}|z|^{2}+ a |z|^{-2\alpha}+O(|z|^{-2\alpha-1})$ where $\alpha >0$.
As proven by Hein and LeBrun in their work on mass in Kähler geometry \cite{Hein},  and the similarity of Kähler potential in (\cite{Arezzo}, section 3), studied by Arezzo, Vedova and Mazzieri, we can estimate the mass for these metrics, which is $a$.
Now, we present the main results of our work in the rest of the paper, where we investigate the Kähler immersions problem for our setting.

\begin{theorem}
    Consider a comlex manifold $M^{n}$ with ALE Kähler metric $g$ and Kähler potential 
  $$\Phi=\frac{1}{2}|z|^{2}+ a |z|^{-2\alpha}+O(|z|^{-2\alpha-1}),$$
where $\alpha >0$. The pair $(M^{n} , g)$ with positive mass does not admit a Kähler immersion into complex Euclidean space.
\end{theorem}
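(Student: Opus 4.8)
The plan is to argue by contradiction using the Hein--LeBrun mass formula
\[
\mathfrak{m}(M,g)=-\frac{\langle\clubsuit(c_{1}),[\omega]^{n-1}\rangle}{(2n-1)\pi^{n-1}}+\frac{(n-1)!}{4(2n-1)\pi^{n}}\int_{M}s_{g}\,d\mu_{g},
\]
whose right-hand side consists of a topological term and a scalar-curvature term. The idea is that a Kähler immersion into flat space makes the topological term vanish and forces the scalar-curvature term to be nonpositive, so the mass would have to be $\le 0$, contradicting positivity. Concretely, suppose that $(M^{n},g)$ admitted a Kähler immersion $f\colon M\to\mathbb{C}^{N}$, $N\le\infty$.

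\emph{(i) The Kähler class vanishes.} Since $\omega=f^{*}\omega_{\mathbb{C}^{N}}=\tfrac{i}{2}\partial\bar\partial\|f\|^{2}$ is globally $\partial\bar\partial$-exact, it is de Rham exact, so $[\omega]=0$ in $H^{2}(M;\mathbb{R})$; hence $[\omega]^{n-1}=0$ in $H^{2n-2}(M;\mathbb{R})$ (for $n\ge 2$) and $\langle\clubsuit(c_{1}),[\omega]^{n-1}\rangle=0$. \emph{(ii) The scalar curvature is nonpositive.} In a local holomorphic chart $g_{i\bar j}=\sum_{\mu}\partial_{i}f^{\mu}\,\overline{\partial_{j}f^{\mu}}$, so by Cauchy--Binet $\det(g_{i\bar j})=\sum_{S}|\varphi_{S}|^{2}$, the $\varphi_{S}$ being the holomorphic $n\times n$ minors of the Jacobian of $f$, not all vanishing since $f$ is an immersion; thus $i\partial\bar\partial\log\det(g_{i\bar j})\ge 0$, i.e. the Ricci form $\rho_{g}=-i\partial\bar\partial\log\det(g_{i\bar j})$ is a nonpositive $(1,1)$-form and $s_{g}\le 0$ everywhere on $M$. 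Under the fall-off assumed for the mass to be defined, $\int_{M}s_{g}\,d\mu_{g}$ converges and is $\le 0$.

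Feeding (i) and (ii) into the formula leaves $\mathfrak{m}(M,g)=\frac{(n-1)!}{4(2n-1)\pi^{n}}\int_{M}s_{g}\,d\mu_{g}\le 0$. But the mass of $(M^{n},g)$ equals $a$, and $a>0$ since the mass is positive --- a contradiction. Hence $(M^{n},g)$ admits no Kähler immersion into any $\mathbb{C}^{N}$.

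The analytic ingredients --- Cauchy--Binet, the sign of the second-fundamental-form contribution to $\rho_{g}$, and convergence of $\int_{M}s_{g}\,d\mu_{g}$ --- are routine; I expect the real obstacle to be step (i), namely making precise that the immersion genuinely annihilates the topological term. If the immersion is only local in the sense of Definition~\ref{def2:4}, one must first promote it --- via Calabi's rigidity, at worst on the universal cover --- to a globally defined $f$ whose squared norm is a global Kähler potential, and then verify that $[\omega]=0$, equivalently $\langle\clubsuit(c_{1}),[\omega]^{n-1}\rangle=0$, persists on $M$ itself, where a mild $\pi_{1}(M)$ issue enters. With that settled the argument is immediate.
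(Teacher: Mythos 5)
Your argument is a genuinely different route from the paper's. The paper never touches the mass formula in its proof: it fixes a base point $p=(R,\dots,R)$ far out on the ALE end, Taylor-expands the diastasis $D_0(z_p)$ of the given potential around $p$, extracts the $4\times 4$ block of the Calabi coefficient matrix $(a_{jk})$ (with $a_{33}=\tfrac{a\alpha^2(\alpha+1)^2}{4}R^{-2\alpha-4}$ as the decisive entry), and invokes Calabi's resolvability criterion (Theorems \ref{theo2:6} and \ref{theo2:7}) to conclude that positive semidefiniteness fails. That is a purely local obstruction, so it rules out even a local Kähler immersion near any point. You instead use the Hein--LeBrun formula as a global obstruction: exactness of $\omega$ kills the topological term, and the Cauchy--Binet/plurisubharmonicity argument forces $s_g\le 0$, hence $\mathfrak{m}\le 0$. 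Both of your analytic steps are correct (the nonpositivity of the Ricci form of a Kähler submanifold of flat space is classical, and integrability of $s_g$ is part of the hypothesis that the mass is defined). What your approach buys is conceptual transparency and independence of the detailed form of the potential --- you only need ``mass $=a$'' and $a>0$; what it costs is exactly the issue you flag yourself: step (i) needs a \emph{globally} defined $f$ with $\|f\|^2$ a global potential, so if the hypothesis is only local resolvability you must pass through Calabi's local-to-global rigidity, and on a non-simply-connected ALE space the universal cover need not be ALE with the same mass, so the $\pi_1$ issue is not merely cosmetic. You also need $n\ge 2$ for $[\omega]^{n-1}=0$ to make sense as stated.

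One further point worth your attention: your argument and the paper's do not obstruct the same sign of $a$. The paper's positivity conditions ($a_{33}\ge 0$, etc.) are violated precisely when $a<0$, and its proof in fact ends by concluding non-immersibility ``for any negative value of $a$'', which sits uneasily with the theorem's hypothesis of \emph{positive} mass. Your mass-formula argument, by contrast, genuinely obstructs $a>0$, i.e., it proves the statement as literally written (for global immersions, $n\ge2$). So your proposal is not a rederivation of the paper's proof but an independent argument for the stated sign; reconciling the two signs is something you should be aware of rather than a defect of your proposal.
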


\begin{proof}
Firstly, we denote the error terms by $f(z,\bar{z})$, which is a power series with no terms of degree less than $-2\alpha -1$ in either the variables $z$ and $\bar{z}$. This means that there exist a positive constant $C$ and bounded function $H(z,\Bar{z})$ such that $H(z,\Bar{z})=\frac{f(z,\Bar{z})}{|z|^{-2\alpha -1}} \leq C$.
The diastasis function associated to these metrics is defined by: 
\begin{align*}
 D(z_{p},w_{p})&= \Phi(z_{p},\Bar{z}_p)+  \Phi(w_{p},\Bar{w}_p)- \Phi(z_{p},\Bar{w}_p)- \Phi(w_{p},\Bar{z}_p)\\
&=\frac{1}{2}(z_{p}+p)(\Bar{z}_p+p) + a(z_{p}+p)^{-\alpha}(\Bar{z}_p+p)^{-\alpha} + f(z_{p}+p,\Bar{z}_p+p)\\
&+\frac{1}{2}(w_{p}+p)(\Bar{w}_p+p) + a(w_{p}+p)^{-\alpha}(\Bar{w}_p+p)^{-\alpha} + f(w_{p}+p,\Bar{w}_p+p)\\
&-\frac{1}{2}(z_{p}+p)(\Bar{w}_p+p) - a(z_{p}+p)^{-\alpha}(\Bar{w_{p}}+p)^{-\alpha} - f(z_{p}+p,\Bar{w}_p+p)\\
&-\frac{1}{2}(w_{p}+p)(\Bar{z}_p+p) - a(w_{p}+p)^{-\alpha}(\Bar{z}_p+p)^{-\alpha} - f(w_{p}+p,\Bar{z}_p+p).\\
\end{align*}
 Next, we consider a fixed point $p=(R,R,...,R) \in \mathbb{C}^{n}$ for sufficiently large enough $R$. The diastasis function centered at $p$ is given by:
\begin{align*}
D_{0}(z_{p})&=\frac{1}{2}(z_{p}+p)(\Bar{z}_p+p) + a(z_{p}+p)^{-\alpha}(\Bar{z}_p+p)^{-\alpha} + f(z_{p}+p,\Bar{z}_p+p)\\
&+\frac{1}{2}|p|^{2}+ a|p|^{-2\alpha} +f(p,p)\\
&-\frac{1}{2}(z_{p}+p)(p) - a(z_{p}+p)^{-\alpha}(p)^{-\alpha}-f(z_{p}+p,p)\\
&-\frac{1}{2}(p)(\Bar{z_{p}} + p)-a (p)^{-\alpha}(\Bar{z}_p+p)^{-\alpha}-f(p,\Bar{z}_p+p ).\\
\end{align*}
Then, we consider the Taylor expansion of the diastasis function $D_{0}(z_{p})$ around $z_{p}=0$ as follows:
\begin{align*}
D_{0}(z_{p})& =\frac{n}{2}R^{2}+ \frac{R}{2}\sum_{i=1}^{n}z_{i,p} + \frac{R}{2}\sum_{i=1}^{n}\bar{z}_{i,p} 
+ \frac{1}{2}\sum_{i=1}^{n}z_{i,p}\bar{z}_{i,p} +anR^{-2\alpha} - a\alpha R^{-2\alpha-1}\sum_{i=1}^{n}\bar{z}_{i,p}\\
&\hspace{-12pt}- a\alpha R^{-2\alpha-1}\sum_{i=1}^{n}{z}_{i,p} + a\alpha^{2} R^{-2\alpha -2}\sum_{i=1}^{n}z_{i,p}\bar{z}_{i,p}+
    \frac{a\alpha (\alpha+1)}{2} R^{-2\alpha-2}\sum_{i=1}^{n}z_{i,p}^{2}\\
&\hspace{-12pt}+ \frac{a\alpha (\alpha+1)}{2} R^{-2\alpha-2}\sum_{i=1}^{n}\bar{z}_{i,p}^{2}- \frac{a\alpha^{2}(\alpha+1)}{2}R^{-2\alpha-3} \sum_{i=1}^{n}\bar{z}_{i,p}^{2} z_{i,p}+ \frac{a\alpha^{2}(\alpha+1)}{2}R^{-2\alpha-3} \sum_{i=1}^{n}\bar{z}_{i,p} z_{i,p}^{2}\\
&\hspace{-12pt}+\frac{a\alpha^{2}(\alpha+1)^{2}}{4} R^{-2\alpha-4 }\sum_{i=1}^{n}\bar{z}_{i,p}^{2} z_{i,p}^{2}+...+ \frac{n}{2}R^{2}+ anR^{-2\alpha}+...\\
&\hspace{-12pt}-\frac{n}{2}R^{2}- naR^{-2\alpha}-\frac{R}{2}\sum_{i=1}^{n}z_{i,p}+ a\alpha R^{-2\alpha-1}\sum_{i=1}^{n}z_{i,p} - \frac{a\alpha (\alpha +1)}{2}R^{-2\alpha-2}\sum_{i=1}^{n}z_{i,p}^{2}+...\\\
&\hspace{-12pt}-\frac{n}{2}R^{2}- naR^{-2\alpha}-\frac{R}{2}\sum_{i=1}^{n}\bar{z}_{i,p}+ a\alpha R^{-2\alpha-1}\sum_{i=1}^{n}\bar{z}_{i,p} - \frac{a\alpha (\alpha +1)}{2}R^{-2\alpha-2}\sum_{i=1}^{n}\bar{z}_{i,p}^{2}+...\\\ 
\\
&\hspace{-12pt} + f(z_{p}+p,\Bar{z}_{p}+{p})-f(p,p) -f(z_{p}+p,p) -f(p,\Bar{z}_{p}+p).\\\
\end{align*}

Now considering the matrix of the coefficients of the diastasis function, excluding the error terms, denoted by $A_{ij}$, the coefficients of the whole matrix is thus $A_{ij}+H_{ij}A_{ij}R^{-\frac{1}{2}}$. Clearly, if $A_{ij}$ is not positive semidefinite, then $A_{ij}+H_{ij}A_{ij}R^{-\frac{1}{2}}$ is not positive semidefinite as well.
Therefore we get,
\begin{align*}
D_{0}(z_{p}) - \text{error terms}&=(\frac{1}{2}+a\alpha^{2}R^{-2\alpha-2})z_{1,p}\bar{z}_{1,p}- \frac{a\alpha^{2}(\alpha+1)}{2}  R^{-2\alpha-3}\bar{z}_{1,p}^{2}z_{1,p} \\
& - \frac{a\alpha^{2}(\alpha+1)}{2} R^{-2\alpha-3}\bar{z}_{1,p}z_{1,p}^{2} +\frac{a\alpha^{2}(\alpha+1)^{2}}{4} R^{-2\alpha-4} \bar{z}_{1,p}^{2}z_{1,p}^{2}\\
&+(\frac{1}{2}+a\alpha^{2}R^{-2\alpha-2})z_{2,p}\bar{z}_{2,p}+...\\
\end{align*}
$A_{ij}$ is $ \infty \times \infty$ matrix. We consider the first $4 \times 4$ matrix of the coefficients as follows:
\medskip

\[
\begin{bmatrix}
    0 & 0 & 0 &0  \\
    0& a_{11} & 0 & a_{13}  \\
    0 & 0 & a_{22}& 0  \\
    0 & a_{31} & 0 & a_{33}
\end{bmatrix}
\]
\medskip

\hspace{-15pt}where $a_{11}=a_{22}=\frac{1}{2}+a\alpha^{2}R^{-2\alpha-2}, a_{13}= a_{31}=-\frac{a\alpha^{2}(\alpha+1)}{2}  R^{-2\alpha-3}$ and $a_{33}=\frac{a\alpha^{2}(\alpha+1)^{2}}{4} R^{-2\alpha-4}$.\\
If $(M^{n}, g)$   admits a Kähler immersion into $\mathbb{C}^{N}$, then it is resolvable of rank at most $N$. This implies that the matrix of coefficients is positive semidefinite and has rank at most $N$. Consequently, we obtain the following conditions:
\[
\frac{1}{2}+a\alpha^{2}R^{-2\alpha-2} \geq 0, \text{ and }   \frac{a}{2}+a^{2}\alpha^{2}R^{-2\alpha -4} \geq 0.
\]
We can easily see that $ a \geq 0$. The above matrix is not semipositive definite for any negative value of $a$; indeed, it has a negative eigenvalue. Hence, the whole matrix of the coefficients cannot be positive semidefinite and the diastasis function is not resolvable of any rank. Therefore, using Theorems \ref{theo2:6}  and \ref{theo2:7} , we conclude  that  $(M^{n} , g)$ does not admit a Kähler immersion into complex Euclidean space for any negative value of $a$.
\end{proof}

Now, we are considering the same problem argument for complex hyperbolic space.

\begin{theorem}
    Consider a complex manifold $M^{n}$ with ALE Kähler metric $g$ and Kähler potential 
    $$\Phi=\frac{1}{2}|z|^{2}+ a |z|^{-2\alpha}+O(|z|^{-2\alpha-1}),$$ 
where $\alpha >0$. The pair $(M^{n} , g)$ with any sign of mass cannot be Kähler immersed
    into complex hyperbolic space.
\end{theorem}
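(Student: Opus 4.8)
The plan is to repeat the strategy of the preceding theorem, replacing Calabi's flat criterion by Theorem~\ref{theo2:10}: $(M^{n},g)$ admits a Kähler immersion into $\mathbb{C}H^{N}$ for some $N\le\infty$ if and only if the $\infty\times\infty$ matrix $(c_{jk})$ obtained from the expansion $1-e^{-D_{0}(z)}=\sum_{j,k}c_{jk}\,z^{m_{j}}\bar z^{m_{k}}$ is positive semidefinite. Hence it suffices to exhibit, for a conveniently chosen base point, one principal minor of $(c_{jk})$ that is strictly negative, \emph{uniformly in the sign of} $a$ (equivalently, of the mass).

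I would start from the diastasis $D_{0}(z_{p})$ centered at $p=(R,\dots,R)$ already computed above, and write $D_{0}=Q+C+\cdots$, where $Q=\big(\tfrac12+a\alpha^{2}R^{-2\alpha-2}\big)\sum_{i}z_{i,p}\bar z_{i,p}$ is the Hermitian quadratic part (the pure terms $\sum z_{i,p}^{2}$ and $\sum\bar z_{i,p}^{2}$ cancel, and there is no constant or linear part, by Calabi's normalization), while $C$ collects the cubic terms, which are of order $R^{-2\alpha-3}$, together with the quartic term $\tfrac{a\alpha^{2}(\alpha+1)^{2}}{4}R^{-2\alpha-4}\sum_{i}z_{i,p}^{2}\bar z_{i,p}^{2}$. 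Expanding $1-e^{-D_{0}}=D_{0}-\tfrac12 D_{0}^{2}+\tfrac16 D_{0}^{3}-\cdots$ and noting that $D_{0}^{2}$ has no contribution of bidegree $(1,1)$, $(1,2)$ or $(2,1)$ (it only starts at bidegree $(2,2)$), the entries of $(c_{jk})$ relative to the monomials $z_{1}$ and $z_{1}^{2}$ are, up to terms that decay faster in $R$,
\begin{align*}
c_{z_1,z_1}&=\tfrac12+a\alpha^{2}R^{-2\alpha-2}+O(R^{-2\alpha-3}),\\
|c_{z_1,z_1^2}|&=O(R^{-2\alpha-3}),\\
c_{z_1^2,z_1^2}&=\tfrac{a\alpha^{2}(\alpha+1)^{2}}{4}R^{-2\alpha-4}-\tfrac12\big(\tfrac12+a\alpha^{2}R^{-2\alpha-2}\big)^{2}+O(R^{-2\alpha-3}),
\end{align*}
where, exactly as in the previous proof, the error function $f=O(|z|^{-2\alpha-1})$ contributes to each coefficient only terms carrying an extra negative power of $R$, hence absorbed into the remainders.

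The crucial point, and the difference from the flat case, is that the term $-\tfrac12 D_{0}^{2}$ injects into the quartic coefficient $c_{z_1^2,z_1^2}$ the $O(1)$ negative quantity $-\tfrac12\big(\tfrac12+a\alpha^{2}R^{-2\alpha-2}\big)^{2}\to-\tfrac18$, whereas $c_{z_1,z_1}\to\tfrac12$ and $c_{z_1,z_1^2}\,c_{z_1^2,z_1}=O(R^{-4\alpha-6})\to0$ as $R\to\infty$. Therefore the principal $2\times2$ minor of $(c_{jk})$ on the rows and columns indexed by $z_{1}$ and $z_{1}^{2}$ (sitting inside the analogue of the $4\times4$ block used above) satisfies
\[
c_{z_1,z_1}\,c_{z_1^2,z_1^2}-c_{z_1,z_1^2}\,c_{z_1^2,z_1}=-\tfrac1{16}+o(1)\qquad(R\to\infty),
\]
which is strictly negative once $R$ is large enough, and this holds for \emph{every} real value of $a$. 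Consequently $(c_{jk})$ is not positive semidefinite, i.e., $(M^{n},g)$ is not $(-1)$-resolvable of any rank, so by Theorems~\ref{theo2:10} and \ref{theo2:11} it admits no Kähler immersion into $\mathbb{C}H^{N}$ for any $N\le\infty$, irrespective of the sign of its mass.

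I expect the only genuine obstacle to be computational bookkeeping, exactly as in the Euclidean case: correctly tracking which monomials of $D_{0}$ and of $D_{0}^{2}$ enter the four coefficients above, and verifying rigorously that the unspecified error function $f$ contributes to each $c_{jk}$ only quantities decaying strictly faster in $R$ than the leading terms, so that the sign of the displayed $2\times2$ minor is indeed governed by the universal constant $-\tfrac1{16}$ produced by the quadratic part together with the $-\tfrac12 D_{0}^{2}$ correction.
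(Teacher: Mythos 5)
Your proposal is correct and follows essentially the same route as the paper: both expand $1-e^{-D_{0}}=D_{0}-\tfrac12 D_{0}^{2}+\cdots$ at the base point $p=(R,\dots,R)$, isolate the coefficients indexed by $z_{1}$ and $z_{1}^{2}$, and observe that the $-\tfrac12 D_{0}^{2}$ term forces the corresponding $2\times2$ principal minor to tend to $-\tfrac1{16}$ as $R\to\infty$, independently of the sign of $a$. Your version is, if anything, slightly more explicit about the $O(R^{-4\alpha-6})$ off-diagonal contribution and the absorption of the error function $f$, but the argument is the same.
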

\begin{proof}
    In the proof above, we computed the diastasis function associated to these metrics. Now,  $(M^{n} , g)$ is -1-resolvable of rank $N$ if the matrix of the cofficients $(b_{ij})$
    given by
    \[
    1-e^{-D_{0}(z_{p})}= \sum_{i,j=0}^{\infty} b_{ij}z^{m_{i}}\bar{z}^{m_{j}},
    \]
    is positive semidefinite of rank $N$. Next, we consider the power expansion of the function $1- e^{-(D_{0}(z_{p}) - \text{error terms})}$ around the origin as follows:
\begin{align*}
1- e^{-(D_{0}(z_{p}) - \text{error terms})}&=D_{0}(z_{p}) -\frac{D_{0}(z_{p})^2}{2!} + \frac{D_{0}(z_{p})^{3}}{3!} - \frac{D_{0}(z_{p})^{4}}{4!} + ...
\\
&=(\frac{1}{2}+a\alpha^{2}R^{-2\alpha-2})z_{1,p}\bar{z}_{1,p}- \frac{a\alpha^{2}(\alpha+1)}{2}  R^{-2\alpha-3}\bar{z}_{1,p}^{2}z_{1,p} \\
&+( \frac{a\alpha^{2}(\alpha+1)^{2}}{4} R^{-2\alpha-4}  -\frac{1}{2} (\frac{1}{2}+a\alpha^{2}R^{-2\alpha-2})^{2})\bar{z}_{1,p}^{2}z_{1,p}^{2}\\
&-\frac{a\alpha^{2}(\alpha+1)}{2} R^{-2\alpha-3}\bar{z}_{1,p}z_{1,p}^{2}+(\frac{1}{2}+a\alpha^{2}R^{-2\alpha-2})z_{2,p}\bar{z}_{2,p}+... \\
\end{align*}   
Now, the coefficients of the first $4 \times 4$ matrix are zero except for the following terms:

$$b_{11}=b_{22}=\frac{1}{2}+a\alpha^{2}R^{-2\alpha-2}, \text{ } \text{ } b_{13}= b_{31}=-\frac{a\alpha^{2}(\alpha+1)}{2}  R^{-2\alpha-3},$$ 
$$\text{and } b_{33}= \frac{a\alpha^{2}(\alpha+1)^{2}}{4} R^{-2\alpha-4}  -\frac{1}{2} (\frac{1}{2}+a\alpha^{2}R^{-2\alpha-2})^{2}.$$

\hspace{-15pt}With the same argument as in the previous proof and using Calabi’s criterion, we obtain the following conditions:
\[
\frac{1}{2}+a\alpha^{2}R^{-2\alpha-2} \geq 0, \text{ and } \frac{a\alpha^{2}(\alpha+1)^{2}}{8}R^{-2\alpha-4} -\frac{1}{2}(\frac{1}{2}+a\alpha^{2}R^{-2\alpha-2})^{3} \geq 0.
\]
We can easily see that the second condition is always negative for any value of $a$.
Consequently, we conclude that the matrix mentioned above is not positive semidefinite. Therefore, using Theorems \ref{theo2:10} and \ref{theo2:11} , we conclude that $(M^{n} , g)$ does not admit a Kähler immersion into complex hyperbolic space for any value of $a$.

 \end{proof}

We now turn our attention to the problem in the context of complex projective space. Employing the same method, we find that it is not possible to derive any conclusive results for complex projective space under the same assumptions as in Theorem 3.4. We consider the power expansion of the function $ e^{D_{0}(z_{p})}-1$ around the origin as follows:

\begin{align*}
e^{D_{0}(z_{p})}-1&=D_{0}(z_{p}) + \frac{D_{0}(z_{p})^2}{2!} + \frac{D_{0}(z_{p})^{3}}{3!} + \frac{D_{0}(z_{p})^{4}}{4!} + ...
\\
&=(\frac{1}{2}+a\alpha^{2}R^{-2\alpha-2})z_{1,p}\bar{z}_{1,p}- \frac{a\alpha^{2}(\alpha+1)}{2}  R^{-2\alpha-3}\bar{z}_{1,p}^{2}z_{1,p} - \frac{a\alpha^{2}(\alpha+1)}{2} R^{-2\alpha-3}\bar{z}_{1,p}z_{1,p}^{2}\\
&+(  (\frac{1}{2}+a\alpha^{2}R^{-2\alpha-2})^{2}+ \frac{a\alpha^{2}(\alpha+1)^{2}}{4}) R^{-2\alpha-4} \bar{z}_{1,p}^{2}z_{1,p}^{2}+(\frac{1}{2}+a\alpha^{2}R^{-2\alpha-2})z_{2,p}\bar{z}_{2,p}+... \\
\end{align*}   
 The nonzero coefficients of the first $4 \times 4$ matrix are the following terms:
 
$$b_{11}=b_{22}=\frac{1}{2}+a\alpha^{2}R^{-2\alpha-2}, \text{ } \text{ } b_{13}= b_{31}=-\frac{a\alpha^{2}(\alpha+1)}{2}  R^{-2\alpha-3},$$ 
$$\text{and } b_{33}= \frac{1}{2}(\frac{1}{2}+a\alpha^{2}R^{-2\alpha-2})^{2}+ \frac{a\alpha^{2}(\alpha+1)^{2}}{4}R^{-2\alpha-4}.$$\\
 Using the same argument as above, we obtain the following conditions:
    \[
    \frac{1}{2}+a\alpha^{2}R^{-2\alpha-2} \geq 0,
    \]
    \[
    \frac{1}{8}a\alpha^{2}(\alpha+1)^{2}R^{-2\alpha-4}+ \frac{1}{16}+ \frac{3}{8} a\alpha^{2}R^{-2\alpha-2}+\frac{3}{4}a^{2}\alpha^{4}R^{-4\alpha-4} + \frac{1}{2}a^{3}\alpha^{6}R^{-6\alpha-6} \geq 0.
    \]\\
    The above conditions hold for any value of $a$. However, from the first $4 \times 4$ matrix, we cannot obtain any result.

The Simanca metric on the blow-up $\mathbb{\Tilde {C}}^{2}$ of $\mathbb{C}^{2}$ at the origin is a well-known and important exmaple (both mathematically and physically) of non homogeneous complete, zero constant scalar curvature metric. Furthermore, in \cite{simanca} the authors proved that Simanca metric is projectively induced.
\begin{remark}
The Simanca metric is a well-known counterexample which has positive mass and is projectively induced \cite{simanca}.
\end{remark}

\section*{Acknowledgement}
We thank Professor Claudio Arezzo for his insightful discussions and valuable comments on this work. We also would like to thank Professor Reza Seyyedali for several useful discussions and helpful advice.

\bibliographystyle{plain}

\newcommand{\etalchar}[1]{$^{#1}$}

\begin{itemize}
    \item Tarbiat Modares University, Tehran and ICTP, Trieste; farnazghanbari@modares.ac.ir
    \item Tarbiat Modares University, Tehran; aheydari@modares.ac.ir
\end{itemize}
\end{document}